\numberwithin{equation}{section}
\newtheorem{theorem}{Theorem}[section] %
\newtheorem{lemma}[theorem]{Lemma} %
\newtheorem{corollary}[theorem]{Corollary} %
\newtheorem{problem}[theorem]{Problem} %
\begin{document}
\title{A note on the lower bound of representation functions}
\author{{Xing-Wang Jiang\footnote{xwjiangnj@sina.com(X.-W. Jiang)}, Csaba S\'{a}ndor\footnote{csandor@math.bme.hu(C. S\'{a}ndor)}, Quan-Hui Yang\footnote{yangquanhui01@163.com(Q.-H. Yang)} }\\
\small  *School of Mathematical Sciences and Institute of Mathematics,\\\small Nanjing Normal University, Nanjing 210023, China\\
\small  $\dagger$Institute of Mathematics, Budapest University of Technology and Economics,\\
\small MTA-BME Lend\" ulet
Arithmetic Combinatorics Research Group\\
\small  H-1529 B. O. Box, Hungary\\
\small  $\ddagger$School of Mathematics and Statistics, Nanjing University of Information \\\small Science and Technology,
Nanjing 210044, China
}
\date{}
\maketitle \baselineskip 18pt \maketitle \baselineskip 18pt

{\bf Abstract.}
For a set $A$ of nonnegative integers, let $R_2(A,n)$ denote the number of solutions to $n=a+a'$ with $a,a'\in A$, $a<a'$. Let $A_0$ be the Thue-Morse sequence and $B_0=\mathbb{N}\setminus A_0$. Let $A\subset \mathbb{N}$ and $N$ be a positive integer such that $R_2(A,n)=R_2(\mathbb{N}\setminus A,n)$ for all $n\geq 2N-1$. Previously, the first author proved that if $|A\cap A_0|=+\infty$ and $|A\cap B_0|=+\infty$, then $R_2(A,n)\geq \frac{n+3}{56N-52}-1$ for all $n\geq 1$. In this paper, we prove that the above lower bound is nearly best possible.
We also get some other results.

 \vskip 3mm
 {\bf 2010 Mathematics Subject Classification:} 11B34, 11B83

 {\bf Keywords and phrases:} partition, representation function, Thue-Morse sequence, S\'{a}rk\"{o}zy's problem
\vskip 5mm

\section{Introduction}
Let $\mathbb{N}$ be the set of nonnegative integers. For a set $A\subseteq\mathbb{N}$, let $R_2(A,n)$ denote the number of solutions to $a+a'=n$ with $a,a'\in A$, $a<a'$.
In the last few decades, there are many research on the representation function $R_2(A,n)$. Professor S\'{a}rk\"{o}zy posed the problem that whether there exist two sets $A$ and $B$ with infinite symmetric difference such that $R_2(A,n)=R_2(B,n)$ for all sufficiently large integers $n$. Let $D(0)=0$ and let $D(a)$ be the number of ones in the binary representation of $a$. Let $A_0$ be the set of all nonnegative integers $a$ with even $D(a)$ and let $B_0=\mathbb{N}\setminus A_0$. The sequence $A_0$ is called Thue-Morse sequence. In 2002, Dombi \cite{D} proved that $R_2(A_0,n)=R_2(B_0,n)$ for all $n\geq0$ which answered the problem. In 2004, using the generating functions, S\'{a}ndor \cite{S} proved the following precise formulation.
\vskip 2mm
\noindent\text{Theorem A} (\cite[Theorem 1]{S}).
{\it Let $N$ be a positive integer. Then $R_2(A,n)=R_2(\mathbb{N}\setminus A,n)$ for all $n\geq 2N-1$ if and only if $|A\cap[0,2N-1]|=N$ and $2m\in A\Leftrightarrow m\in A,~2m+1\in A\Leftrightarrow m\notin A$ for all $m\geq N$.}
\vskip 2mm
Let $R_{A,B}(n)$ be the number of solutions to
$a+b=n,a\in A,b\in B.$ In 2011, Chen \cite{C} gave the lower bound of $R_2(A,n)$. The following theorems are proved.
\vskip 2mm
\noindent\text{Theorem B} (\cite[Theorem 1.4 (i)]{C}).
{\it Let $A$ be a subset of $\mathbb{N}$ and $N$ be a positive integer such that $R_2(A,n)=R_2(\mathbb{N}\setminus A,n)$ for all $n\geq 2N-1$. If $|A\cap A_0|=+\infty$ and $|A\cap B_0|=+\infty$, then for all $n\geq1$, we have
$$R_2(A,n)\geq\frac{n}{40N(N+1)}-1,~~R_{A,\mathbb{N}\setminus A}(n)\geq \frac{n}{20N(N+1)}-1.$$}

\vskip -4mm
\noindent\text{Theorem C} (\cite[Theorem 1.5]{C}).
{\it Let $A$ be a subset of $\mathbb{N}$ and $N$ be a positive integer such that $R_2(A,n)=R_2(\mathbb{N}\setminus A,n)$ for all $n\geq 2N-1$. Then, for any function $f$ with $f(x)\rightarrow +\infty$ as $x\rightarrow +\infty$, the set of integers $n$ with
$$R_2(A,n)\geq \frac{n}{16}-f(n),~R_{A,\mathbb{N}\setminus A}(n)\geq \frac{n}{8}-f(n)$$
has the density one.}
\vskip 2mm
Recently, Jiang, S\'{a}ndor and Yang \cite{JSY} improved the result of Theorem C as follows.

\vskip 2mm
\noindent\text{Theorem D} (\cite[Corrolary 1.3]{JSY}).
{\it Let $A$ be a subset of $\mathbb{N}$ and $N$ be a positive integer such that $R_2(A,n)=R_2(\mathbb{N}\setminus A,n)$ for all $n\geq 2N-1$. Then for any $0<\theta<\frac{2\log2-\log3}{42\log 2-9\log3}=0.0149\dots$, the set of integers $n$ with
$$R_2(A,n)=\frac{n}{8}+O(n^{1-\theta})$$
has density one.}
\vskip 2mm
They also posed a problem for further research.
\begin{problem}(\cite[Problem 1.5]{JSY})
Let $A$ be a subset of $\mathbb{N}$ and $N$ be a positive integer such that $R_2(A,n)=R_2(\mathbb{N}\setminus A,n)$ for all $n\geq 2N-1$. Does there exist a sequence $n_1,n_2,\ldots$ such that
$$\lim_{k\rightarrow\infty}\frac{R_2(A,n_k)}{n_k}\neq\frac 1 8?$$
\end{problem}

Afterwards, Jiang \cite{J1} improved the result of Theorem B.

\vskip 2mm
\noindent\text{Theorem E} (\cite[Theorem 1.4]{J1}).
{\it Let $A$ be a subset of $\mathbb{N}$ and $N\geq 2$ such that $R_2(A,n)=R_2(\mathbb{N}\setminus A,n)$ for all $n\geq 2N-1$. If $|A\cap A_0|=+\infty$ and $|A\cap B_0|=+\infty$, then for all $n\geq1$, we have
$$R_2(A,n)\geq\frac{n+3}{56N-52}-1,~~R_{A,\mathbb{N}\setminus A}(n)\geq \frac{n+3}{28N-26}-1.$$}

For more related result, see \cite{CL,CSST,CW,CT,ES1,ES2,ESS1,J,T,TY,YT}. In this paper, we prove that the lower bound of Theorem E is nearly
best possible. The following theorems are proved.
\begin{theorem}\label{thm1}
Let $N\geq2$. Then there exists a set $A\subseteq \mathbb{N}$ such that $R_2(A,n)=R_2(\mathbb{N}\setminus A,n)$ for all $n\geq 2N-1$, $|A\cap A_0|=+\infty$, $|A\cap B_0|=+\infty$ and
$$R_2(A,n)\leq \frac{n+1}{4\cdot 2^{\lfloor \log _2(N-1) \rfloor }}$$
for infinitely many integers $n$.
\end{theorem}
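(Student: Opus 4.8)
The plan is to use S\'andor's characterization (Theorem A) to reduce the construction to a choice of the ``seed'' $A\cap[0,2N-1]$, and then to pin down a single nearly extremal value of the autocorrelation of the associated $\pm1$ sequence. Write $\epsilon_a=1$ if $a\in A$ and $\epsilon_a=-1$ otherwise, and let $\epsilon^0_a=(-1)^{D(a)}$ be the Thue--Morse sign (so $a\in A_0\iff\epsilon^0_a=1$). By Theorem A the sequence $(\epsilon_a)$ is determined by its values on $[0,2N-1]$ through the rule $\epsilon_{2m}=\epsilon_m$, $\epsilon_{2m+1}=-\epsilon_m$ for $m\ge N$, subject only to $\sum_{a=0}^{2N-1}\epsilon_a=0$. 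A generating-function computation (expanding $f_A=\tfrac12(\tfrac1{1-x}+g)$ with $g(x)=\sum_a\epsilon_ax^a$, and using $R_2(A,n)=R_2(\mathbb N\setminus A,n)$ for $n\ge2N-1$) gives, for every $n\ge 2N-1$,
\[
R_2(A,n)=\frac{(n+1)+c_n}{8}-\frac{\mathbf 1_{2\mid n}}{4},\qquad c_n=\sum_{a+b=n}\epsilon_a\epsilon_b .
\]
Thus the task becomes to force $c_n$ as close as possible to its minimum $-(n+1)$ for infinitely many $n$.

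Next I would exploit the special sums $n=2^L-1$ with $L$ odd. Here $b=n-a$ is the bitwise complement $\bar a$ of $a$ in $L$ bits, so $D(a)+D(\bar a)=L$ and $\epsilon^0_a\epsilon^0_{\bar a}=(-1)^L=-1$ for all $a\in[0,n]$; hence for $A=A_0$ one has $c_n=-(n+1)$ exactly. For a general admissible $A$, flipping $\epsilon_a$ relative to $\epsilon^0_a$ on the symmetric difference $A\triangle A_0$ turns a Thue--Morse anti-agreement into an agreement precisely when exactly one of $a,\bar a$ lies in $A\triangle A_0$; consequently
\[
(n+1)+c_n=2\,\#\{0\le a\le n:\ \epsilon_a=\epsilon_{n-a}\}\le 4\,|A\triangle A_0\cap[0,2^L)| .
\]
So it suffices to produce an admissible $A$ for which $A\triangle A_0$ is infinite (forced, since $|A\cap B_0|=\infty$) yet $|A\triangle A_0\cap[0,2^L)|\le 2^{L-t-1}$ for infinitely many odd $L$, where $t=\lfloor\log_2(N-1)\rfloor$; this yields $R_2(A,2^L-1)\le 2^{L-t-2}=\tfrac{(2^L-1)+1}{4\cdot2^{t}}$, which is the claimed bound.

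To build such an $A$, I would take a minimal perturbation of the Thue--Morse set. Flipping the seed at a position $a'\in[N,2N-1]$ forces the flip to propagate to the whole ``basin'' $T_{a'}=\{a:\lfloor a/2^j\rfloor=a'\text{ for some }j\ge0\}$, whereas a flip at a position in $[0,N-1]$ affects that position alone. The unique power of two in $[N,2N-1]$ is $a'_0=2^{t+1}$, and a direct count gives the exact value $|T_{2^{t+1}}\cap[0,2^L)|=2^{L-t-1}-1$, the smallest basin available, which exactly meets the budget above. I would therefore take $A$ to agree with $A_0$ except on $T_{2^{t+1}}$, i.e.\ flip only the seed value at $2^{t+1}$; this already makes both $A\cap A_0$ and $A\cap B_0$ infinite, and it leaves only the cardinality condition $\sum_{a=0}^{2N-1}\epsilon_a=0$ to be restored.

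The main obstacle is exactly this cardinality correction. Since the budget for agreements is saturated to within an additive constant by the basin $T_{2^{t+1}}$ alone, the corrective flips needed to make the seed have size exactly $N$ must contribute essentially no new agreements at $n=2^L-1$. A single flip in $[0,N-1]$ costs two agreements (its partner $\bar a$ is a large integer with an all-ones head, hence lies outside every basin), so one cannot balance the count with low flips when $\sum_{a<2N}(-1)^{D(a)}$ is far from $-2$. The device I expect to need is to add corrective flips in \emph{complementary pairs} at scale $L$: flipping two seed positions whose basins contain mutually complementary top-aligned elements produces ``both-flipped'' pairs, which create no agreements, so the cardinality can be adjusted while keeping $(n+1)+c_n\le 2^{L-t+1}$. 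Showing that such pairs can always be chosen inside $[0,2N-1]$ to realize the required correction $\tfrac12\sum_{a<2N}(-1)^{D(a)}+1$, simultaneously for a common infinite set of odd $L$, is the crux; the remaining ingredients are the elementary basin counts indicated above.
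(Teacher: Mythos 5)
Your framework is essentially correct: the identity $R_2(A,n)=\frac{(n+1)+c_n}{8}-\frac{\mathbf{1}_{2\mid n}}{4}$ for $n\ge 2N-1$, the observation that at $n=2^L-1$ (with $L$ odd) an agreement $\epsilon_a=\epsilon_{n-a}$ occurs exactly when the complementary pair $\{a,\bar a\}$ meets $A\triangle A_0$ in precisely one element, the resulting bound $(n+1)+c_n\le 4\,|A\triangle A_0\cap[0,2^L)|$, and the choice of the basin $T_{2^{t+1}}$ with $|T_{2^{t+1}}\cap[0,2^L)|=2^{L-t-1}-1$ are all sound. The gap is that you stop at the ``cardinality correction,'' declare it the crux, and propose a complementary-pair flipping device whose existence you never establish. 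But that crux is illusory, and it dissolves by a one-line computation you omitted: since exactly one of $2m,2m+1$ lies in $A_0$ for every $m$, we have $|A_0\cap[0,2N-1]|=N$, i.e.\ $\sum_{a<2N}(-1)^{D(a)}=0$ identically --- it is never ``far from'' the needed value. Hence after flipping the seed at $2^{t+1}$ (which adds an element to $A$, since $D(2^{t+1})=1$ puts $2^{t+1}\in B_0$), exactly one corrective flip is needed, say deleting $0$ from the seed. Then $A\triangle A_0=\{0\}\cup T_{2^{t+1}}$, so $|A\triangle A_0\cap[0,2^L)|=(2^{L-t-1}-1)+1=2^{L-t-1}$, meeting your own budget exactly and giving $R_2(A,2^L-1)\le 2^{L-t-2}=\frac{(2^L-1)+1}{4\cdot 2^{t}}$ for every odd $L$ with $2^L\ge 2N$. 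No pairing device is needed: pairs with both elements in $A\triangle A_0$ can only decrease the number of agreements, so the corrective flip costs at most one extra pair, which the factor $4|A\triangle A_0\cap[0,2^L)|$ already absorbs.

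For comparison, the paper's construction is the same kind of perturbation --- its seed $(A_0\cap[2,2N-3])\cup\{2N-2,2N-1\}$ amounts to deleting $0$ and adding whichever of $2N-2,\,2N-1$ lies in $B_0$ --- but its bounding mechanism avoids your correlation identity entirely: since $R_2(A_0,2^{2m+1}-1)=0$ and distinct representations of $n$ are disjoint pairs, one gets directly $R_2(A,2^{2m+1}-1)\le|(A\setminus A_0)\cap[0,2^{2m+1}-1]|$, and the right-hand side is a geometric sum over the blocks $2^l(2N-i)+\bigl([0,2^l-1]\cap A_0\bigr)$. Your route costs more algebra up front but yields the clean exact count $2^{L-t-1}-1$ for the basin of the power of two; once the observation above is inserted, your proof closes and gives the same bound as the paper's.
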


As corollaries we get the following two results

\begin{corollary}
Let $N\geq3$. Then there exists a set $A\subseteq \mathbb{N}$ such that $R_2(A,n)=R_2(\mathbb{N}\setminus A,n)$ for all $n\geq 2N-1$, $|A\cap A_0|=+\infty$, $|A\cap B_0|=+\infty$ and
$$R_2(A,n)< \frac{n+1}{2(N-1)}$$
for infinitely many integers $n$.
\end{corollary}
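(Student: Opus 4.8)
The plan is to derive the corollary directly from Theorem~\ref{thm1}, since the two statements differ only in the constant appearing in the denominator of the upper bound. First I would invoke Theorem~\ref{thm1} with the same $N$ to produce a set $A\subseteq\mathbb{N}$ that already satisfies every structural requirement of the corollary: the identity $R_2(A,n)=R_2(\mathbb{N}\setminus A,n)$ for all $n\geq 2N-1$, together with $|A\cap A_0|=+\infty$ and $|A\cap B_0|=+\infty$. Moreover this same $A$ satisfies
$$R_2(A,n)\leq \frac{n+1}{4\cdot 2^{\lfloor \log_2(N-1)\rfloor}}$$
for infinitely many integers $n$. Thus the only thing left to check is that this bound is at least as strong as the one claimed in the corollary.

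Accordingly, the key step is the purely numerical comparison of the two denominators. Writing $k=\lfloor \log_2(N-1)\rfloor$, the definition of the floor function gives the two-sided estimate $2^k\leq N-1<2^{k+1}$; here the hypothesis $N\geq 3$ guarantees $N-1\geq 2$, so that $k\geq 1$ and $\log_2(N-1)$ is meaningful. The right-hand inequality $N-1<2^{k+1}$ is the one I would exploit: multiplying by $2$ yields $2(N-1)<2^{k+2}=4\cdot 2^{k}$. Hence the two denominators satisfy $0<2(N-1)<4\cdot 2^{\lfloor\log_2(N-1)\rfloor}$, with the inequality strict.

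Finally I would combine these observations. Since $n+1>0$ for every $n\geq 0$ and the larger denominator produces the smaller fraction, the strict inequality between the denominators gives
$$\frac{n+1}{4\cdot 2^{\lfloor\log_2(N-1)\rfloor}}<\frac{n+1}{2(N-1)}$$
for all such $n$, in particular for the infinitely many $n$ furnished by Theorem~\ref{thm1}. Chaining this with the bound from the theorem yields $R_2(A,n)<\frac{n+1}{2(N-1)}$ for infinitely many $n$, which is exactly the asserted conclusion. I expect no genuine obstacle here: the entire content of the corollary is the elementary estimate $N-1<2^{\lfloor\log_2(N-1)\rfloor+1}$, while all the substantive work—constructing $A$ and bounding $R_2(A,n)$—is already carried out in the proof of Theorem~\ref{thm1}.
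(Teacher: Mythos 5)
Your proposal is correct and matches the paper's (implicit) argument: the paper states this corollary as an immediate consequence of Theorem~\ref{thm1}, and the only content is precisely your numerical observation that $2(N-1)<4\cdot 2^{\lfloor\log_2(N-1)\rfloor}$, which follows from $N-1<2^{\lfloor\log_2(N-1)\rfloor+1}$. The same set $A$ then satisfies the strict bound $R_2(A,n)<\frac{n+1}{2(N-1)}$ at the infinitely many $n$ supplied by the theorem, exactly as you argue.
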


\begin{corollary}
For $k\geq0$, let $M_k=2^k+1$. Then there exists a set $D_k\subseteq \mathbb{N}$ such that $R_2(D_k,n)=R_2(\mathbb{N}\setminus D_k,n)$ for all $n\geq 2M_k-1$, $|D_k\cap A_0|=+\infty$, $|D_k\cap B_0|=+\infty$ and
$$R_2(D_k,n)\leq \frac{n+1}{4(M_k-1)}$$
for infinitely many integers $n$.
\end{corollary}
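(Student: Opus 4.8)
The corollary is exactly the specialization of Theorem~\ref{thm1} to the values $N=M_k$, and my first step is simply to invoke that theorem. Apply Theorem~\ref{thm1} with $N=M_k=2^k+1$; this is legitimate since $M_k\ge 2$ for every $k\ge 0$. Because $M_k-1=2^k$ is an exact power of two, the floor in the bound collapses to $\lfloor\log_2(M_k-1)\rfloor=\lfloor\log_2 2^k\rfloor=k$, so the bound of Theorem~\ref{thm1} reads $R_2(A,n)\le \frac{n+1}{4\cdot 2^{k}}=\frac{n+1}{4(M_k-1)}$. The set $A$ produced by Theorem~\ref{thm1} already satisfies $R_2(A,n)=R_2(\mathbb{N}\setminus A,n)$ for $n\ge 2M_k-1$, $|A\cap A_0|=|A\cap B_0|=+\infty$, and the displayed inequality for infinitely many $n$, so setting $D_k:=A$ proves everything. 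Granting Theorem~\ref{thm1}, the only thing to verify is the identity $\lfloor\log_2 2^k\rfloor=k$, and there is no further obstacle; the real content lives in Theorem~\ref{thm1}, whose mechanism I sketch next since $M_k=2^k+1$ is precisely the case where that construction is tightest.

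For an explicit $D_k$ I would start from the Thue--Morse set $A_0$, which by Theorem~A is itself a valid solution for $N=M_k$ (each block $\{2m,2m+1\}$ meets $A_0$ in one point, so $|A_0\cap[0,2M_k-1]|=M_k$). Modify its seed on $[0,2M_k-1]=[0,2^{k+1}+1]$ by one balanced swap: delete $0$ and insert $2^{k+1}$. Both points lie in $[0,2M_k-1]$, and $0\in A_0$ while $2^{k+1}\notin A_0$ (as $D(2^{k+1})=1$ is odd), so the seed still has exactly $M_k$ elements; extending by $2m\in D_k\Leftrightarrow m\in D_k$ and $2m+1\in D_k\Leftrightarrow m\notin D_k$ for $m\ge M_k$ keeps $D_k$ valid by Theorem~A. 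The point $2^{k+1}$ is a reduction target in $[M_k,2M_k-1]$ of \emph{large} binary length $k+2$, so tracing the propagation shows $D_k$ differs from $A_0$ exactly on $\{0\}$ together with the sparse cylinder $S=\{n:\text{the leading }k+2\text{ binary digits of }n\text{ are }10^{k+1}\}$, which has $|S\cap[0,2^{j}-1]|=2^{\,j-k-1}-1$. Since $S$ has density $2^{-(k+1)}<\tfrac12$ and meets both $A_0$ and $B_0$ infinitely often, the mixing conditions $|D_k\cap A_0|=|D_k\cap B_0|=+\infty$ are immediate.

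To extract the numerical bound I would evaluate $R_2(D_k,n)$ along $n=2^{j}-1$ with $j$ odd. Here the balance $|D_k\cap[0,2M_k-1]|=M_k$ forces $\sum_{m\le n}\varepsilon(m)=0$ for odd $n\ge 2M_k-1$ (writing $\varepsilon(n)=\pm1$ for $n\in D_k$ or not), so the generating-function identity gives the exact formula $R_2(D_k,n)=\frac{(n+1)+c_n}{8}$ with $c_n=[x^n]g(x)^2$, $g=\sum_n\varepsilon(n)x^n$. Writing $g=g_0+2d$, where $g_0=\prod_{i\ge0}(1-x^{2^i})$ is the Thue--Morse series and $d$ is supported on the flip set $\{0\}\cup S$, the main term $[x^n]g_0^2=(-2)^{j}=-2^{j}$ cancels $n+1=2^{j}$ for $j$ odd, leaving only $4[x^n](g_0 d)+4[x^n]d^2$. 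At $n=2^{j}-1$ there are no binary carries, so every split $a+(n-a)=n$ has $D(a)+D(n-a)=j$ and each product $\chi_0(a)\chi_0(n-a)$ equals $(-1)^{j}=-1$; this reduces the two cross terms to pure cylinder combinatorics and yields $R_2(D_k,2^{j}-1)=2^{\,j-k-2}-\tfrac12\operatorname{Sym}(j)$, where $\operatorname{Sym}(j)\ge0$ counts those $a\in S$ whose complement $2^{j}-1-a$ also lies in $S$. Hence $R_2(D_k,2^{j}-1)\le 2^{\,j-k-2}=\frac{(2^{j}-1)+1}{4(M_k-1)}$ for all large odd $j$, giving infinitely many admissible $n$.

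The main obstacle, were one to prove this directly rather than cite Theorem~\ref{thm1}, is exactly this final cross-term computation: controlling how the flipped cylinder $S$ interacts with bitwise complementation at scale $2^{j}$. Two choices make it tractable. First, taking the reduction target to have the \emph{large} bit length $k+2$ keeps $S$ sparse enough that $|S\cap[0,2^{j}-1]|=2^{\,j-k-1}-1$, so the crude estimate $\operatorname{Sym}(j)\ge0$ already suffices and no delicate symmetric-pair count is needed; a generic cylinder of bit length $k+1$ would be twice as dense and fail by a factor of two. Second, the balancing deletion of the isolated point $0$ contributes precisely the term $+1$ in $[x^n](g_0 d)$, which converts $2^{\,j-k-1}-1$ into the even integer $2^{\,j-k-1}$ and produces the clean bound $2^{\,j-k-2}$.
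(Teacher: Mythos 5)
Your proposal is correct and follows the paper's (implicit) route exactly: the corollary is the specialization $N=M_k$ of Theorem~\ref{thm1}, where $M_k-1=2^k$ makes $\lfloor\log_2(M_k-1)\rfloor=k$ and turns the bound into $\frac{n+1}{4(M_k-1)}$, which is all the paper intends by stating it as a corollary. Your supplementary sketch (delete $0$, insert $2^{k+1}$ in the seed) in fact coincides with the paper's own construction \eqref{eq1} specialized to $N=M_k$, so nothing in it diverges from the paper either.
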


A trivial corollary of \cite{J1} is the following result.

\begin{corollary}
Let $A$ be a subset of $\mathbb{N}$ and $N\geq 2$ such that $R_2(A,n)=R_2(\mathbb{N}\setminus A,n)$ for all $n\geq 2N-1$. If $|A\cap A_0|=+\infty$ and $|A\cap B_0|=+\infty$, then for all $n\geq56N-54$, we have
$$R_2(A,n)\geq 1.$$
\end{corollary}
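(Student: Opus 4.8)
The plan is to derive this corollary directly from Theorem E, which under exactly the same hypotheses ($N\geq 2$, the condition $R_2(A,n)=R_2(\mathbb{N}\setminus A,n)$ for all $n\geq 2N-1$, and both $|A\cap A_0|$ and $|A\cap B_0|$ infinite) already supplies the lower bound
$$R_2(A,n)\geq \frac{n+3}{56N-52}-1$$
valid for every $n\geq 1$. Thus the only work is to identify for which $n$ this guaranteed bound is large enough to force $R_2(A,n)\geq 1$, and no new combinatorial input is needed.

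The key observation is that $R_2(A,n)$ is a nonnegative integer, so it suffices to arrange that the right-hand side above is strictly positive. First I would note that $56N-52>0$ for $N\geq 2$ (already $56\cdot 2-52=60>0$), so one may clear the denominator without reversing the inequality. Solving $\frac{n+3}{56N-52}-1>0$ then gives $n+3>56N-52$, i.e.\ $n>56N-55$, which for integer $n$ is equivalent to $n\geq 56N-54$. Hence for all such $n$ the bound from Theorem E is strictly positive, and since $R_2(A,n)\in\mathbb{N}$, this upgrades to $R_2(A,n)\geq 1$, as claimed.

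There is essentially no obstacle here; the statement is a quantitative rewriting of Theorem E, which is why the authors label it a trivial corollary. The only point requiring a moment's care is the rounding step: near the threshold $n=56N-54$, Theorem E alone places $R_2(A,n)$ only slightly above $0$, and it is the integer-valuedness of the representation function—rather than any sharper estimate—that delivers the clean conclusion $R_2(A,n)\geq 1$.
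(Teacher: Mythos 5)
Your proof is correct and matches the paper's intent exactly: the paper presents this as a ``trivial corollary'' of Theorem E, and the intended argument is precisely yours --- observe that for $n\geq 56N-54$ the bound $\frac{n+3}{56N-52}-1$ is strictly positive (indeed at least $\frac{1}{56N-52}$), and integrality of $R_2(A,n)$ then gives $R_2(A,n)\geq 1$. The threshold computation $n+3>56N-52\iff n\geq 56N-54$ is handled correctly, so nothing is missing.
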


The bound $56N-54$ can be improved to $14(N-1)$ (see \cite{J1}).

\vskip 2mm
\noindent\text{Theorem F} (\cite[Theorem 1.3]{J1}).
{Let $A$ be a subset of $\mathbb{N}$ and $N\geq 2$ such that $R_2(A,n)=R_2(\mathbb{N}\setminus A,n)$ for all $n\geq 2N-1$. If $A\ne A_0,B_0$ then for all $n\geq14(N-1)$, we have
$$R_2(A,n)\geq 1.$$}

The following statements show that the previous theorem nearly best possible.
\begin{theorem}\label{thm2}
For nonnegative integers $k$, let $N_k=3\cdot4^k$. Then there exists a set $C_k\subseteq \mathbb{N}$ such that
$R_2(C_k,n)=R_2(\mathbb{N}\setminus C_k,n)$ for all $n\geq 2N_k-1$, $|C_k\cap A_0|=+\infty$, $|C_k\cap B_0|=+\infty$ and
$$R_2\left(C_k,\frac{14}{3}N_k-1\right)=0.$$
\end{theorem}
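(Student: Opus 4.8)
My plan is to use Theorem A to reduce the problem to the choice of the initial block $[0,2N_k-1]$, and then to build $C_k$ as a single controlled modification of the Thue--Morse set. Put $n^\ast=\tfrac{14}{3}N_k-1=14\cdot 4^k-1$ and $I_k=[4\cdot 4^k,\,6\cdot 4^k-1]$. The interval $I_k$ is the union of the $4^k$ consecutive pairs $\{2m,2m+1\}$ with $2\cdot 4^k\le m\le 3\cdot 4^k-1$, and it lies inside $[N_k,2N_k-1]$. I define $C_k$ on $[0,2N_k-1]$ to agree with $A_0$ outside $I_k$, while inside each pair of $I_k$ I keep the element that $A_0$ discards. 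Since $A_0$ contains exactly one element of every pair $\{2m,2m+1\}$ (as $D(2m+1)=D(2m)+1$), so does $C_k$, whence $|C_k\cap[0,2N_k-1]|=N_k$. Extending $C_k$ to $\mathbb N$ by $2m\in C_k\Leftrightarrow m\in C_k$ and $2m+1\in C_k\Leftrightarrow m\notin C_k$ for $m\ge N_k$, Theorem A yields $R_2(C_k,n)=R_2(\mathbb N\setminus C_k,n)$ for all $n\ge 2N_k-1$ for free; for $k=0$ this recovers $C_0\cap[0,5]=\{0,3,4\}$. To read off membership I work with $S_k:=C_k\triangle A_0$: because $C_k$ and $A_0$ obey the same doubling rule for arguments $\ge N_k$, one checks $S_k(2m)=S_k(2m+1)=S_k(m)$ for $m\ge N_k$, so $n\in S_k$ is decided by halving $n$ repeatedly until the ``root'' lands in $[N_k,2N_k-1]$ and testing membership in $I_k$, and $n\in C_k\Leftrightarrow (D(n)\text{ even})\oplus(n\in S_k)$.

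The heart of the proof is $R_2(C_k,n^\ast)=0$. Suppose $a+a'=n^\ast$ with $a<a'$. Since $a\le\lfloor n^\ast/2\rfloor=7\cdot 4^k-1<4N_k$, the smaller summand is reducible by the halving rule at most once, whereas $a'\ge 7\cdot 4^k$ is reducible once or twice, so after at most two halvings both summands reach their roots in $[0,2N_k-1]$. The analysis stays finite and uniform in $k$ because of the following obstruction: writing $n^\ast=4n^\ast_{k-1}+3$ with $n^\ast_{k-1}=14\cdot 4^{k-1}-1$, any representation forces $\{a\bmod 4,\,a'\bmod 4\}\in\{\{0,3\},\{1,2\}\}$ and $\lfloor a/4\rfloor+\lfloor a'/4\rfloor=n^\ast_{k-1}$, and one cannot peel a full base-$4$ digit from both summands, since that would require $\lfloor a/4\rfloor,\lfloor a'/4\rfloor\ge N_k$, impossible as their sum $n^\ast_{k-1}<2N_k$. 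I would therefore enumerate the admissible residues of $a,a'$ modulo $4$ (and, for the doubly reducible $a'$, modulo $8$), use the binary shape $n^\ast=\overline{1101\underbrace{1\cdots 1}_{2k}}$ to fix the carries, and in each case show that the root of $a$ or of $a'$ is either absent from $A_0$ or flipped out of $C_k$ by $I_k$, so $a$ and $a'$ are never simultaneously in $C_k$.

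Finally, $|C_k\cap A_0|=|C_k\cap B_0|=+\infty$ follows from the doubling structure together with the description of $S_k$. For each $m\ge N_k$ exactly one of $2m,2m+1$ lies in $C_k$, and, since $D(2m)=D(m)$ and $D(2m+1)=D(m)+1$, that element lies in $A_0$ precisely when $m\notin S_k$ and in $B_0$ precisely when $m\in S_k$. As $S_k$ and its complement each meet $[N_k,2N_k-1]$ (the former in $I_k$, the latter in the nonempty set $[N_k,2N_k-1]\setminus I_k$) and each is closed under $m\mapsto 2m,2m+1$ for $m\ge N_k$, both are infinite, giving infinitely many elements of $C_k$ in $A_0$ and in $B_0$.

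I expect the main obstacle to be the bookkeeping in the second paragraph: controlling, uniformly in $k$, whether the root of the doubly reduced summand $a'$ falls inside the flip block $I_k$, since this interaction is exactly what turns the surviving Thue--Morse representations of $n^\ast$ into non-representations for $C_k$, and it must be tracked simultaneously with the carry pattern produced by the long block of $1$'s in the binary expansion of $n^\ast$.
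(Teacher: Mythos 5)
Your construction and the peripheral verifications are sound: defining $C_k$ on $[0,2N_k-1]$ as $A_0$ with the pairs inside $I_k=[4\cdot 4^k,6\cdot 4^k-1]$ flipped gives $|C_k\cap[0,2N_k-1]|=N_k$, Theorem A then yields $R_2(C_k,n)=R_2(\mathbb{N}\setminus C_k,n)$ for $n\geq 2N_k-1$, and your argument via $S_k=C_k\triangle A_0$ (with $S_k(2m)=S_k(2m+1)=S_k(m)$ for $m\geq N_k$) does establish $|C_k\cap A_0|=|C_k\cap B_0|=+\infty$. Indeed, for $k=0,1$ your set agrees with the one the paper constructs (e.g.\ both give $\{0,3,5,6,9,10,12,15,16,19,21,22\}$ on $[0,23]$). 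But the central claim, $R_2(C_k,14\cdot 4^k-1)=0$, is never proved: your second paragraph is a plan, not an argument. The enumeration you defer ("I would therefore enumerate the admissible residues \dots and in each case show \dots") is exactly the nontrivial content of the theorem: one must track, uniformly in $k$, the residues of $a$ and $a'$, the carries produced by the block of $2k+1$ ones in the binary expansion of $n^\ast$, the parity of the peeled bits (each peeled odd bit flips membership, since $2m+1\in C_k\Leftrightarrow m\notin C_k$), and whether each root lands in the flip block $I_k$. Some of the resulting cases reduce to genuine facts about the Thue--Morse set itself (e.g.\ for $a<2\cdot 4^k$ one needs $D(n^\ast-a)=2k+3-D(a)$, hence $a\in A_0\Rightarrow n^\ast-a\notin A_0$), and none of this is carried out. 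You yourself flag this bookkeeping as the main obstacle; as written, the proposal proves everything except the one property that makes the theorem true.

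The gap is avoidable, and the paper's route shows how: it defines $C_k$ recursively by $C_k=(4C_{k-1}+\{0,3\})\cup(4(\mathbb{N}\setminus C_{k-1})+\{1,2\})$ and inducts on $k$. Since $14\cdot 4^k-1\equiv 3\pmod 4$, any representation $a+a'=14\cdot 4^k-1$ with $a,a'\in C_k$ uses residues $\{0,3\}$ or $\{1,2\}$ modulo $4$; by the recursive definition this forces $a=4b$, $a'=4b'+3$ with $b,b'\in C_{k-1}$, or $a=4b+1$, $a'=4b'+2$ with $b,b'\in\mathbb{N}\setminus C_{k-1}$, and in both cases $b+b'=14\cdot 4^{k-1}-1$ with $b\neq b'$. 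The first case contradicts the induction hypothesis directly; the second contradicts it after invoking $R_2(C_{k-1},n)=R_2(\mathbb{N}\setminus C_{k-1},n)$, valid since $14\cdot 4^{k-1}-1\geq 2N_{k-1}-1$. This one-step descent, powered by the identity between the representation functions of $C_{k-1}$ and its complement, replaces your entire case analysis. If you wish to keep your non-recursive definition, the cleanest repair is to prove that it satisfies the displayed recursion and then run this induction, rather than to chase roots and carries directly.
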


\begin{theorem}\label{thm3}
Let $N\geq3$. Then there exists a set $A\subseteq \mathbb{N}$ such that $R_2(A,n)=R_2(\mathbb{N}\setminus A,n)$ for all $n\geq 2N-1$, $|A\cap A_0|=+\infty$, $|A\cap B_0|=+\infty$ and
$$R_2(A,3N-1)=0.$$
\end{theorem}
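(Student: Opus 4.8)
The plan is to use the characterization in Theorem A to turn the construction into a finite choice: any set with $|A\cap[0,2N-1]|=N$ extends uniquely to a set with $R_2(A,n)=R_2(\mathbb{N}\setminus A,n)$ for all $n\ge 2N-1$ via the rules $2m\in A\Leftrightarrow m\in A$ and $2m+1\in A\Leftrightarrow m\notin A$ for $m\ge N$, so I need only specify $A\cap[0,2N-1]$ and then read off membership of larger integers from this recursion. Writing $f=\mathbf{1}_A$, I would take $A\cap[N,2N-1]=\{N,N+1,\dots,\lfloor(3N-2)/2\rfloor\}$, the lower part of the top block, chosen so that no two of its elements sum to $3N-1$, together with a set $U\subseteq[0,N-1]$ supplying the remaining $\lceil N/2\rceil$ elements: the even numbers of $[0,N-1]$ when $N$ is even, and $\{0\}$ together with the odd numbers of $[0,N-1]$ when $N$ is odd. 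A direct count gives $|A\cap[0,2N-1]|=\lfloor N/2\rfloor+\lceil N/2\rceil=N$, so Theorem A applies and $A$ is a legitimate set.

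To verify $R_2(A,3N-1)=0$ I would first note that in any representation $3N-1=a+a'$ with $a<a'$ one has $a\le 2N-1$, and then split into two cases. If $a\ge N$, then both $a$ and $a'=3N-1-a$ lie in $[N,2N-1]$; but every element of $A\cap[N,2N-1]$ is at most $\lfloor(3N-2)/2\rfloor$, so two distinct such elements sum to at most $3N-3<3N-1$, and this case contributes nothing. If $a\le N-1$, then $a'=3N-1-a\in[2N,3N-1]$, and here I would compute $A\cap[2N,3N-1]$ from the recursion: since $\lfloor n/2\rfloor\in[N,2N-1]$ for every $n$ in this block, a short parity computation shows that $A\cap[2N,3N-1]$ consists exactly of the even integers of the block (all of them when $N$ is even, all but $3N-1$ when $N$ is odd). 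The set $U$ was chosen precisely so that for every $a\in U$ its partner $3N-1-a$ is odd (respectively equals $3N-1$), hence lies outside $A$; so this case contributes nothing either, and $R_2(A,3N-1)=0$.

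It remains to check $|A\cap A_0|=|A\cap B_0|=+\infty$, and this is the step that needs genuine care. The tool is that for any seed $m^{*}\in[N,2N-1]$, applying $f(2m)=f(m)$ repeatedly (and, when $m^{*}\notin A$, first $f(2m^{*}+1)=1-f(m^{*})$) produces an infinite family inside $A$ along which the Thue-Morse value $t(n)=D(n)\bmod 2$ is constant: the family lies in $A_0$ when $g(m^{*})=1$ and in $B_0$ when $g(m^{*})=0$, where $g(n)\equiv f(n)+t(n)\pmod 2$. Hence it suffices to show that $g$ is non-constant on $[N,2N-1]$, for then both families occur. Now $f$ equals $1$ on the initial segment $[N,\lfloor(3N-2)/2\rfloor]$ and $0$ on the rest of $[N,2N-1]$, so if $g$ were constant then $t$ would be constant on each of these two segments; this is impossible as soon as a segment has at least three elements, since the Thue-Morse sequence contains no three consecutive equal terms. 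This disposes of all $N\ge 6$, and the remaining cases $N\in\{3,4,5\}$ are settled by computing $g$ directly. The main obstacle is exactly this: the vanishing of the representation count is an essentially mechanical parity argument, whereas guaranteeing that both intersections stay infinite forces one to invoke cube-freeness of the Thue-Morse word to rule out the possibility that $A$ eventually coincides with $A_0$ or $B_0$.
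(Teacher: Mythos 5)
Your proposal is correct and follows essentially the same route as the paper: a parity-split initial segment $A\cap[0,2N-1]$ (the even numbers of $[0,N-1]$ plus an interval starting at $N$ when $N$ is even, and a minor variant of the paper's odd-$N$ choice, with $\{0\}$ added and the interval shortened by one), extended to all of $\mathbb{N}$ via Theorem A, followed by the same parity argument showing no two elements of $A$ sum to $3N-1$. The only notable difference is that you explicitly verify $|A\cap A_0|=|A\cap B_0|=+\infty$ (via non-constancy of $g\equiv f+t\pmod 2$ on $[N,2N-1]$ and the fact that the Thue--Morse sequence has no three consecutive equal terms), a step the paper merely asserts.
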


\section{Proofs}
In this section, we prove our main results. Firstly, we give a lemma which will be used in the proofs of theorems.

\begin{lemma}(\cite[Lemma 1]{CT})\label{lem1}
Let $A$ be a subset of $\mathbb{N}$ such that $R_2(A,n)=R_2(\mathbb{N}\setminus A,n)$ for all $n\geq 2N-1$. Let $l,k,i$ be integers with $k\geq N,~l\geq 0$ and $0\leq i\leq 2^l-1$. Then

(i) if $2|D(i)$, then $k\in A\Leftrightarrow 2^lk+i\in A$;

(ii) if $2\nmid D(i)$, then $k\in A\Leftrightarrow 2^lk+i \notin A.$
\end{lemma}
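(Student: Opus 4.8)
The plan is to prove statements (i) and (ii) together by induction on $l$, the engine being the structural recursion furnished by Theorem A. Since $R_2(A,n)=R_2(\mathbb{N}\setminus A,n)$ for all $n\geq 2N-1$, that theorem guarantees $2m\in A\Leftrightarrow m\in A$ and $2m+1\in A\Leftrightarrow m\notin A$ whenever $m\geq N$; I will combine these with the elementary digit-count identities $D(2j)=D(j)$ and $D(2j+1)=D(j)+1$. The base case $l=0$ forces $i=0$, and since $D(0)=0$ is even the assertion reduces to the tautology $k\in A\Leftrightarrow k\in A$, so (i) holds and (ii) is vacuous.

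For the inductive step I assume the claim for a given $l\geq 0$ and take $i$ with $0\leq i\leq 2^{l+1}-1$, writing $i=2j$ or $i=2j+1$ with $0\leq j\leq 2^l-1$. In either case $2^lk+j\geq k\geq N$, so Theorem A applies to the integer $2^lk+j$. If $i=2j$, then $2^{l+1}k+i=2(2^lk+j)$ and Theorem A gives $2^{l+1}k+i\in A\Leftrightarrow 2^lk+j\in A$ with no sign change, while $D(i)=D(j)$ keeps the parity of the digit-count unchanged; applying the induction hypothesis to $j$ then transfers the correct equivalence from $2^lk+j$ back to $k$. If $i=2j+1$, then $2^{l+1}k+i=2(2^lk+j)+1$ and Theorem A gives $2^{l+1}k+i\in A\Leftrightarrow 2^lk+j\notin A$, a single negation, which is exactly compensated by the shift $D(i)=D(j)+1$ that reverses the parity of the digit-count; thus the induction hypothesis of the opposite type delivers the claim.

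The only delicate point is this accounting of negations: one must verify, across the four cases determined by the parity of $i$ and the parity of $D(i)$, that the negation (if any) introduced by the recursion of Theorem A cancels against the negation (if any) dictated by the induction hypothesis, leaving an equivalence whose sign is governed precisely by the parity of $D(i)$, as required in (i) and (ii). I do not expect any further obstacle, since every application of Theorem A needs only the trivial inequality $2^lk+j\geq k\geq N$, so once the sign bookkeeping is organized the induction closes immediately.
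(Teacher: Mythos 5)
Your proof is correct: the base case, the splitting $i=2j$ or $i=2j+1$, the identities $D(2j)=D(j)$, $D(2j+1)=D(j)+1$, and the observation $2^lk+j\geq k\geq N$ (which licenses every application of Theorem A) close the induction, and the four-case sign bookkeeping you flag as the delicate point does cancel exactly as you claim. Note that the paper itself offers no proof of this lemma --- it is quoted from \cite[Lemma 1]{CT} --- and your induction on $l$ via the recursion $2m\in A\Leftrightarrow m\in A$, $2m+1\in A\Leftrightarrow m\notin A$ from Theorem A is precisely the standard derivation used in that cited source, so your argument is essentially the canonical one rather than a new route.
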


Now, we give the proofs of theorems.
\begin{proof}[Proof of Theorem \ref{thm1}]
Let
\begin{eqnarray}\label{eq1}
A\cap[0,2N-1]=(A_0\cap[2,2N-3])\cup\{2N-2,2N-1\}.
\end{eqnarray}
For $a\geq N$, let $2a\in A$, $2a+1\notin A$ if $a\in A$ and $2a\notin A$, $2a+1\in A$ if $a\notin A$. By Theorem A, $R_2(A,n)=R_2(\mathbb{N}\setminus A,n)$ for all $n\geq 2N-1$. By definition of $A_0$, we get that $|\{2N-2,2N-1\}\cap A_0|=1$. It follows that $|A\cap A_0|=+\infty$ and $|A\cap B_0|=+\infty$. It is enough to prove that there are infinitely many integers $n$ with $R_2(A,n)\leq \frac{n+1}{4\cdot 2^{\lfloor \log _2(N-1) \rfloor }}$.

For $N\leq k\leq 2N-1$, let $S^{(k)}=A_0$ if $k\in A$ and $S^{(k)}=B_0$ if $k\notin A$. By Lemma \ref{lem1}, one can get easily that
\begin{eqnarray*}
A=(A_0\cap[2,N-1])\cup(\cup_{l=0}^{\infty}\cup_{k=N}^{2N-1}(2^lk+[0,2^l-1]\cap S^{(k)})).
\end{eqnarray*}
Furthermore, by (\ref{eq1}),
$$
(\cup_{l=0}^{\infty}\cup_{k=N}^{2N-3}(2^lk+[0,2^l-1]\cap S^{(k)}))\subseteq A_0
$$
and we have that, if $2N-2\in B_0$, then
\begin{eqnarray*}
&(\cup_{l=0}^{\infty}(2^l(2N-2)+[0,2^l-1]\cap S^{(2N-2)}))\subseteq B_0,\\
&(\cup_{l=0}^{\infty}(2^l(2N-1)+[0,2^l-1]\cap S^{(2N-1)}))\subseteq A_0;
\end{eqnarray*}
if $2N-1\in B_0$, then
\begin{eqnarray*}
&(\cup_{l=0}^{\infty}(2^l(2N-2)+[0,2^l-1]\cap S^{(2N-2)}))\subseteq A_0,\\
&(\cup_{l=0}^{\infty}(2^l(2N-1)+[0,2^l-1]\cap S^{(2N-1)}))\subseteq B_0.
\end{eqnarray*}

It is known that $R_2(A_0,2^{2m+1}-1)=0$ for any nonnegative integers $m$. Thus, if $2^{2m+1}-1=a+a'$ with $a<a'$ and $a,a'\in A$, then either $a\in A\setminus A_0$ or $a'\in A\setminus A_0$. Let us suppose that $2N-i\in B_0$, where $i=1$ or $2$. Then
$$
A\setminus A_0=\cup_{l=0}^{\infty}(2^l(2N-i)+[0,2^l-1]\cap A_0).
$$
Hence
\begin{eqnarray*}
R_2(A,2^{2m+1}-1)&\leq&|(A\setminus A_0)\cap[0,2^{2m+1}-1]|\\
&=&|(\cup_{l=0}^{\infty}(2^l(2N-i)+[0,2^l-1]\cap A_0))\cap[0,2^{2m+1}-1]|.
\end{eqnarray*}
If $2^l(2N-2)\geq 2^{2m+1}$ for some $l$, then for
$$a\in(2^l(2N-i)+[0,2^l-1]\cap A_0),$$
we have $a\geq 2^{2m+1}$. Therefore, for $m> \frac{\log_2(N-1)}{2}$,
\begin{eqnarray*}
R_2(A,2^{2m+1}-1)\leq\sum_{l=0}^{\lceil2m-1-\log_2(N-1)\rceil}|[0,2^l-1]\cap A_0|\\
=1+\sum_{l=1}^{\lceil2m-1-\log_2(N-1)\rceil}2^{l-1}=2^{\lceil2m-1-\log_2(N-1)\rceil}= \frac{2^{2m+1}}{4\cdot 2^{\lfloor \log _2(N-1)\rfloor}}.
\end{eqnarray*}
This completes the proof.
\end{proof}

\begin{proof}[Proof of Theorem \ref{thm2}]
Let
$$C_0\cap[0,5]=\{0,3,4\},$$
and let $2c\in C_0,~2c+1\notin C_0$ if $c\in C_0$ and $2c\notin C_0,~2c+1\in C_0$ if $c\notin C_0$ for $c\geq 3$.
For $k\geq1$, let
\begin{eqnarray*}
C_k=(4C_{k-1}+\{0,3\})\cup(4(\mathbb{N}\setminus C_{k-1})+\{1,2\}).
\end{eqnarray*}
Now, we prove that $C_k$ is the desired set by using induction on $k$.

By Theorem A, one can verify easily that Theorem \ref{thm2} holds for $k=0$. Suppose that it is true for $k-1~(k\geq1)$. Then
$$|C_{k-1}\cap[0,2N_{k-1}-1]|=|(\mathbb{N}\setminus C_{k-1})\cap[0,2N_{k-1}-1]|=N_{k-1},$$
$$|C_{k-1}\cap A_0|=+\infty,~~|C_{k-1}\cap B_0|=+\infty$$
and for $m\geq N_{k-1}$,
\begin{eqnarray}\label{eq2}
2m\in C_{k-1}\Leftrightarrow m\in C_{k-1},~~2m+1\in C_{k-1}\Leftrightarrow m\notin C_{k-1}.
\end{eqnarray}
By the definition of $C_k$,
$$|C_k\cap[0,2N_k-1]|=2(|C_{k-1}\cap[0,2N_{k-1}-1]|+|(\mathbb{N}\setminus C_{k-1})\cap[0,2N_{k-1}-1]|)=N_k$$
and
$$|C_{k}\cap A_0|=+\infty,~~|C_{k}\cap B_0|=+\infty.$$
For $m\geq N_k$, by (\ref{eq2}),
\begin{eqnarray*}
2m\in C_k\Leftrightarrow m\in C_k,~~2m+1\in C_k\Leftrightarrow m\notin C_k.
\end{eqnarray*}
It follows from Theorem A that $R_2(C_k,n)=R_2(\mathbb{N}\setminus C_k,n)$ for all $n\geq 2N_k-1$.

Suppose that $\frac{14}{3}N_k-1=14\cdot4^k-1=a_k+a_k'$ for some $a_k,a_k'\in C_k,~a_k<a_k'$. Then there exist integers $a_{k-1},a_{k-1}'\in C_{k-1}$ such that
$$14\cdot4^k-1=4a_{k-1}+(4a_{k-1}'+3)$$
or there exist integers $a_{k-1},a_{k-1}'\in \mathbb{N}\setminus C_{k-1}$ such that
$$14\cdot4^k-1=(4a_{k-1}+1)+(4a_{k-1}'+2).$$
Hence,
$$\frac{14}{3}N_{k-1}-1=14\cdot4^{k-1}-1=a_{k-1}+a_{k-1}',$$
where $a_{k-1},a_{k-1}'\in C_{k-1}$ or $a_{k-1},a_{k-1}'\in \mathbb{N}\setminus C_{k-1}$. Clearly, $a_{k-1}\neq a_{k-1}'$. Since $R_2(C_{k-1},n)=R_2(\mathbb{N}\setminus C_{k-1},n)$ for all $n\geq 2N_{k-1}-1$, we have $R_2(C_{k-1},\frac{14}{3}N_{k-1}-1)\geq 1$, a contradiction. This completes the proof.
\end{proof}

\begin{proof}[Proof of Theorem \ref{thm3}]
For odd integers $N$, let
$$A\cap[0,2N-1]=\left\{1,3,5,\cdots,N-2,N,N+1,\cdots,\frac{3N-1}{2}\right\}$$
and let $2a\in A,~2a+1\notin A$ if $a\in A$ and $2a\notin A,~2a+1\in A$ if $a\notin A$ for $a\geq N$. Then $|A\cap A_0|=+\infty$, $|A\cap B_0|=+\infty$ and
$$A\cap[2N,3N-1]=\{2N,2N+2,2N+4,\cdots,3N-1\}.$$
It is easy to see that $R_2(A,3N-1)=0$. By Theorem A, $R_2(A,n)=R_2(\mathbb{N}\setminus A,n)$ for all $n\geq 2N-1$.

For even integers $N$, let
$$A\cap[0,2N-1]=\left\{0,2,4,\cdots,N-2,N,N+1,\cdots,\frac{3N}{2}-1\right\}$$
and let $2a\in A,~2a+1\notin A$ if $a\in A$ and $2a\notin A,~2a+1\in A$ if $a\notin A$ for $a\geq N$. Similar to the above argument, we can get the same result. This completes the proof.
\end{proof}

\section*{Acknowledgements}
The first author was supported by the National Natural Science Foundation of China, Grant No. 11771211 and the Project of Graduate Education Innovation of Jiangsu Province, Grant No. KYCX19\_0775. The second author was supported by the NKFIH Grant No. K129335 and by the Lend\" ulet program
of the Hungarian Academy of Sciences (MTA), under grant number LP2019-15/2019. The third author was supported by the National Natural Science Foundation for Youth of China, Grant No. 11501299, the Natural Science Foundation of Jiangsu Province, Grant Nos. BK20150889, 15KJB110014 and the Startup Foundation for Introducing Talent of NUIST, Grant No. 2014r029.

\renewcommand{\refname}{Bibliography}


\begin{thebibliography}{99}

\bibitem{C} Y. G. Chen, On the values of representation functions, {\it Sci China Math.}, {\bf54} (2011) 1317--1331.

\bibitem{CL} Y. G. Chen and V. F. Lev, Integers sets with identical representation functions, {\it Integers}, {\bf16} (2016) A36.

\bibitem{CSST} Y. G. Chen, A. S\'{a}rk\"{o}zy, V. T. S\'{o}s and M. Tang, On the monotonicity properties of additive representation functions, {\it Bull. Austral. Math. Soc.}, {\bf72} (2005) 129--138.

\bibitem{CT} Y. G. Chen and M. Tang, Partitions of natural numbers with the same representation functions, {\it J. Number Theory}, {\bf 129} (2009) 2689--2695.

\bibitem{CW} Y. G. Chen and B. Wang, On additive properties of two special sequences, {\it Acta Arith.}, {\bf110} (2003) 299--303.


\bibitem{D} G. Dombi, Additive properties of certain sets, {\it Acta Arith.}, {\bf103} (2002) 137--146.

\bibitem{ES1} P. Erd\H{o}s and A. S\'{a}rk\"{o}zy, Problems and results on additive properties of general sequences, I, {\it Pacific J. Math.}, {\bf118} (1985) 347--357.

\bibitem{ES2} P. Erd\H{o}s and A. S\'{a}rk\"{o}zy, Problems and results on additive properties of general sequences, II, {\it Acta Math. Hungar.}, {\bf48} (1986) 201--211.

\bibitem{ESS1} P. Erd\H{o}s, A. S\'{a}rk\"{o}zy and V. T. S\'{o}s, Problems and results on additive properties of general sequences, III, {\it Studia Sci. Math. Hungar.}, {\bf22} (1987) 53--63.

\bibitem{J} X. W. Jiang, On the values of representation functions, {\it Acta Math. Hungar.}, {\bf 159} (2019) 349--357.

\bibitem{JSY} X.-W. Jiang, C. S\'{a}ndor and Q.-H. Yang, \emph{On the values of representation functions II}, arXiv: 1904.10352v1.

\bibitem{J1} X.-W. Jiang, \emph{On the values of representation functions III}, {\it Int. J. Number Theory}, Online.

\bibitem{S} C. S\'{a}ndor, Partitions of natural numbers and their representation functions, {\it Integers}, {\bf4} (2004) A18.

\bibitem{T} M. Tang, Partitions of the set of natural numbers and their representation functions, {\it Discrete Math.}, {\bf308} (2008) 2614--2616.

\bibitem{TY} M. Tang and W. Yu, A note on partitions of natural numbers and their representation functions, {\it Integers}, {\bf12} (2012) A53.

\bibitem{YT} Q. H. Yang and M. Tang, Representation functions on finite sets with extreme symmetric differences, {\it J. Number Theory}, {\bf180} (2017) 73--85.
\end{thebibliography}
\end{document}